\newcommand{\cN}{{N}}
\newcommand{\cNini}{\cN^{-}_i}
\newcommand{\cNouti}{\cN^{+}_i}
\newcommand{\cM}{{ M}}
\newcommand{\cT}{{ T}}
\newcommand{\cK}{{ K}}
\newcommand{\xijk}{{x_{ij}^k}}
\newcommand{\xjik}{{x_{ji}^k}}
\newcommand{\tij}{{t_{ij}}}
\newcommand{\tji}{{t_{ji}}}
\newcommand {\beqn}{\begin{equation}}\newcommand {\eeqn}{\end{equation}}
\newcommand {\beqan}{\begin{eqnarray}}\newcommand {\eeqan}{\end{eqnarray}}
\newcommand {\beqa}{\begin{eqnarray*}}\newcommand {\eeqa}{\end{eqnarray*}}
\newcommand \proj[2]{\textup{proj}_{#1}(#2)}
\newcommand{\R}{\ensuremath{\mathbb{R}}}
\newcommand{\Z}{\ensuremath{\mathbb{Z}}}
\newcommand{\be}[1]{\begin{equation}\label{#1}}
\newcommand{\ee}{\end{equation}}
\def\FMS{\ensuremath{{F}_{MS}} }
\newcommand{\ZZ}{\Z}
\newcommand{\ceil}[1]{\ensuremath{\lceil#1\rceil}}
\newcommand{\FLOOR}[1]{\ensuremath{\lfloor #1 \rfloor}}
\newcommand{\CEIL}[1]{\ensuremath{\lceil#1\rceil}}
\newcommand{\sm}{\setminus}
\newcommand{\UT}{U(T)} 
\newcommand{\UTs}{U(T^*)} 	
\newcommand{\UTe}{U^=(T)} 	
\newcommand{\UTse}{U^=(T^*)} 	
\newcommand{\UTh}{U(\widehat T)}
\newcommand{\DT}{D(T)}
\newcommand{\BT}{B(T)}
\newcommand{\BTs}{B(T^*)} 	
\newcommand{\BdTs}{B(2 T^*)} 	
\newcommand{\BdTse}{B^=(2 T^*)} 	
\newcommand{\BTse}{B^=(T^*)} 	
\newtheorem{proposition}{Proposition}
\newtheorem{lemma}{Lemma}
\newtheorem{corollary}{Corollary}
\theoremstyle{definition}
\newtheorem{definition}{Definition}
\theoremstyle{remark}
\newtheorem{remark}{Remark}
\title{On Capacity Models for Network Design}
\author{Alper Atamt\"urk and Oktay G\"unl\"uk}
\thanks{ \noindent \hskip -2.5em
	Alper Atamt\"urk: Industrial Engineering \& Operations Research, University of
	California, Berkeley, CA 94720 USA. \texttt{atamturk@berkeley.edu} \\
Oktay G\"unl\"uk: Mathematical Sciences, IBM T. J. Watson Research Center, Yorktown
Heights, NY 10598 USA. \texttt{gunluk@us.ibm.com}
}
\begin{document}
	
\maketitle

\begin{abstract}
{
	In network design problems capacity constraints are modeled in three different ways depending on the application and the underlying technology for installing capacity:
	\textit{directed, bidirected,} and \textit{undirected}. In the literature, polyhedral investigations for strengthening mixed-integer formulations are done separately for each model. In this paper, we examine the relationship between these models to provide a unifying approach and show that one can indeed translate valid inequalities from one to the others. In particular, we show that the projections of the undirected and bidirected models onto the capacity variables are the same. We demonstrate that valid inequalities previously given for the undirected and bidirected models can be derived as a consequence of the relationship between these models and the directed model.
}
\end{abstract}

\begin{center}
November 2017
\end{center}

\BCOLReport{17.07}

\section{Introduction}

In network design problems capacity constraints are modeled in three different ways depending on the application and the underlying technology for installing capacity:
\textit{directed, bidirected,} and \textit{undirected}.
In \textit{directed} models, the total flow on an arc is limited by the capacity of the directed arc.
In \textit{bidirected} models, if a certain capacity is installed on an arc, 
then the same capacity also needs to be installed on the reverse arc. Whereas
in \textit{undirected} models, the sum of the flow on an arc and its reverse arc is limited by the capacity of the undirected edge associated with the two arcs.

In the literature polyhedral investigations for strengthening mixed-integer formulations are done separately for each model.
\citet{MM:st-path,MMV:conv-2core,MMV:modeling-2fnlp,M:proj-netload,A:kpart,A:Hamid:2015} consider the undirected capacity model.
\citet{BG:cnd,G:bc-nd} study the bidirected capacity model. Whereas
\citet{MW:nd,BCGT:cap-inst,ANS:avub,A:nd,AR:arc,AG:mixset,AR:mcf-sep} consider the directed capacity model.
In this paper, we examine the relationship between 
these three separately-studied models to provide a unifying approach 
and show how one can translate valid inequalities from one to the others.
In particular, we show that the projections of the undirected and bidirected models onto the capacity variables are the same.
We demonstrate that valid inequalities previously given for undirected and bidirected models can be derived as a consequence of the relationship between these models and the directed model.

Let $G=(N,E)$ be an undirected graph with node set $N$ and edge set $E$. 
Let $A$ be the arc set obtained from $E$ by including the arcs in each direction for the edges in $E$.
Let $\cM$ denote the set of facility types where a  unit of facility $m\in\cM$ provides capacity $c_m$. 
Depending on the model, facilities are installed either on the edges or on the arcs of the network and accordingly, we use $\bar c\in\R^E$ or $\bar c\in\R^A$ to denote the existing capacities on the edges or arcs of the network.
Without loss of generality, we  assume that $c_m\in\Z$ for all $m\in\cM$ and $c_1<c_2<\ldots<c_{|M|}$.
Let the demand data for the problem be given by the square matrix $\cT=\{\tij\}$, where $\tij\ge0$ is the amount of (directed) traffic that 
needs to be routed from node $i\in \cN$ to $j\in \cN$. 
Let $K=\{(ij)\in N\times N\::\:i\not=j\}$ and define the $|K|\times|N|$ demand matrix $D=\{d_u^k\}$,  where
$$
d_u^k = \begin{cases}
\phantom{-}\tij & \text{ if } u=j,\\
-\tij & \text{ if } u=i,\\
\phantom{-}0 & \text{ otherwise, }
\end{cases}
$$
for $(ij)=k\in K$ and $u\in \cN$.

\section{Undirected capacity model}
In the undirected network design problem, the sum of the flow on an arc and its reverse arc is limited by the capacity of the undirected edge associated with the two arcs.
This problem can be formulated as follows:
\beqan \nonumber 
 \text{min~~~~~} ~f(x,y)\hskip2cm&&\\[.3cm]
\text {s.t.~~~~~~~} 
\sum_{j \in \cNouti} \xijk ~~- \sum_{j \in \cNini} \xjik~ &=& d_i^k, ~~~ {\rm for\ } k \in \cK, \  i\in \cN, 
\label{eq:flowxx} \\ \label{eq:capxx}
    \sum_{k \in \cK } \xijk ~~+~ \sum_{k \in \cK } \xjik ~~  &\leq& \bar c_e+ \sum_{m \in \cM } \ c_m  y_{m,e},~~~ {\rm for\ } e=\{i,j\} \in E,\\[.2cm] \label{eq:nonnegxx} x,~y~&\ge& 0,\quad y\in\Z^{M\times E},
\eeqan 
where  $x$ and $y$ denote the flow and capacity variables respectively and $f$ denotes the cost function. 
In most applications the function $f$ can be decomposed as $f(x,y)=f^1(x)+f^2(y)$ and, furthermore, it is typically linear.

Let  $\UT$ denote the set of feasible solutions to inequalities \eqref{eq:flowxx}--\eqref{eq:nonnegxx}.
A capacity vector $\bar y$  {\em accommodates} traffic $T$  if there exists a feasible flow vector $x$ such that $(x,\bar y)\in \UT$.
In other words, $\bar y$  accommodates  $T$ if  $\bar y\in \proj y{\UT}$, where $\proj y{\cdot}$ denotes the orthogonal projection operator onto the space of the $y$ variables.
We say that two traffic matrices $T$ and $\widehat T$ are {\em pairwise similar} if  $\tij+\tji=\hat t_{ij}+\hat t_{ji}$ for all $(ij)\in K$.
We next show that $\proj y{\UT}=\proj y{\UTh}$ provided that $T$ and $\widehat T$ are {pairwise similar}.

\begin{lemma}\label{lem1}
	Capacity vector $y$ accommodates  $T$ if and only if it accommodates all $\widehat T$ {pairwise similar} to  $T$.
\end{lemma}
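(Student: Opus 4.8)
The \emph{if} direction is immediate: pairwise similarity is reflexive (take $\widehat T=T$), so if $y$ accommodates every $\widehat T$ pairwise similar to $T$ it in particular accommodates $T$. The content is in the \emph{only if} direction. Since pairwise similarity is symmetric, it suffices to show that whenever $y$ accommodates $T$ it also accommodates an arbitrary pairwise similar $\widehat T$: given a feasible flow $x$ with $(x,y)\in\UT$, the plan is to construct a flow $\hat x$ with $(\hat x,y)\in\UTh$ reusing the same capacity vector $y$. Throughout I write $x^{(ij)}$ for the flow of commodity $(ij)$, with arc components $x^{(ij)}_{pq}$.

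I would transform $T$ into $\widehat T$ one unordered pair at a time. Fix a pair $\{i,j\}$ and, without loss of generality, suppose $\hat t_{ij}\ge \tij$; pairwise similarity then gives $\delta := \hat t_{ij}-\tij = \tji-\hat t_{ji}\ge 0$. The idea is to move $\delta$ units of routed demand from commodity $(ji)$ to commodity $(ij)$ by reversing a scaled copy of the commodity-$(ji)$ flow. If $\delta=0$ there is nothing to do; otherwise $\tji\ge\delta>0$ and I set $g := (\delta/\tji)\,x^{(ji)}$, which satisfies $0\le g\le x^{(ji)}$ componentwise and has flow balance $+\delta$ at $i$ and $-\delta$ at $j$. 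Let $\bar g$ be its arc-reversal, $\bar g_{pq}:=g_{qp}$. I define $\hat x$ to agree with $x$ on every commodity other than $(ij)$ and $(ji)$, and set $\hat x^{(ji)} := x^{(ji)}-g$ and $\hat x^{(ij)} := x^{(ij)}+\bar g$.

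Three things must be checked. Conservation: $x^{(ji)}-g$ has flow balance $\tji-\delta=\hat t_{ji}$ at $i$ and its negative at $j$, so it routes commodity $(ji)$ for demand $\hat t_{ji}$; and $\bar g$ has flow balance $-\delta$ at $i$ and $+\delta$ at $j$, which is exactly the increment $d^{(ij)}$ undergoes when $\tij$ rises to $\hat t_{ij}$, so $x^{(ij)}+\bar g$ routes commodity $(ij)$ for demand $\hat t_{ij}$; hence \eqref{eq:flowxx} holds. Nonnegativity: $\hat x^{(ji)}=(1-\delta/\tji)\,x^{(ji)}\ge 0$ and $\hat x^{(ij)}=x^{(ij)}+\bar g\ge 0$, so \eqref{eq:nonnegxx} holds. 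Capacity is the crux: on any edge $e=\{p,q\}$ the change in the left-hand side of \eqref{eq:capxx} contributed by the two arcs is $\big(-g_{pq}-g_{qp}\big)+\big(\bar g_{pq}+\bar g_{qp}\big)=\big(-g_{pq}-g_{qp}\big)+\big(g_{qp}+g_{pq}\big)=0$, since reversing a subflow and reassigning it to the oppositely oriented commodity leaves the total bidirectional flow on every edge unchanged. Thus \eqref{eq:capxx} continues to hold with the same $y$.

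Finally, because each elementary transfer modifies only commodities $(ij)$ and $(ji)$ and scales off the untouched flow $x^{(ji)}$, the transfers for distinct unordered pairs are independent and can be applied in any order (or simultaneously). Performing the transfer for every pair turns $x$ into a flow $\hat x$ satisfying \eqref{eq:flowxx}--\eqref{eq:nonnegxx} for $\widehat T$ with capacity $y$, so $(\hat x,y)\in\UTh$ and $y$ accommodates $\widehat T$. I expect the capacity check to be the main obstacle, but the reversal construction is designed to make it transparent: the undirected constraint \eqref{eq:capxx} sees only the per-edge sum of the two arc flows, and that sum is precisely the invariant preserved by reversing flow and swapping it between commodity $(ij)$ and commodity $(ji)$.
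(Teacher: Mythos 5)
Your proof is correct and takes essentially the same approach as the paper: both arguments fix one unordered pair at a time, modify only the two opposite commodities by reversing part of one commodity's flow and reassigning it to the other, and rely on the key invariant that this leaves the per-edge sum $x_{ij}^{uv}+x_{ji}^{uv}+x_{ij}^{vu}+x_{ji}^{vu}$ unchanged, so the same $y$ still satisfies \eqref{eq:capxx}. The only cosmetic difference is that the paper redistributes the combined flow of both commodities by a ratio $\alpha=\hat t_{uv}/\sigma$, whereas you transfer only the excess $\delta$; both are valid instances of the same idea.
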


\ignore{
\begin{proof}  
	It is sufficient to argue that the claim holds when  $T$ and $\widehat T$ differ only in two entries corresponding to traffic between the pair of nodes $u,v\in N$, in such a way that $\hat t_{uv}+\hat t_{vu}= t_{uv}+ t_{vu}$. 
	One can then repeat the argument for the remaining pairs of nodes. 
	
	Let  $u,v\in N$ be such that $\hat t_{uv}+\hat t_{vu}>0$.
	Assuming $y$ can accommodate  $T$, let $x$ be a  flow vector such that $(x,y)\in \UT$.
	Construct  $\hat x$ from $x$ by letting all entries of $\hat x$ corresponding to $k\in K\sm \{uv,vu\}$ be same as that of $x$.
	Let $\alpha = \hat t_{uv}/(\hat t_{uv}+\hat t_{vu})=\hat t_{uv}/( t_{uv}+ t_{vu})$.
	For the remaining entries of $\hat x$ associated with commodities $uv$ and $vu$, we set $\hat x_{ij}^{uv} = \alpha (x_{ij}^{uv} +x_{ji}^{vu} )$ and $\hat x_{ij}^{vu} = (1-\alpha) (x_{ij}^{vu} +x_{ji}^{uv} )$ for all $(i,j)\in A$. 
	
	Notice that $\hat x_{ij}^{vu}+\hat x_{ji}^{vu}+\hat x_{ij}^{uv}+\hat x_{ji}^{uv} =  x_{ij}^{vu}+ x_{ji}^{vu}+ x_{ij}^{uv}+ x_{ji}^{uv} $ for all $(ij)\in A$, and therefore $\hat x$ satisfies the capacity constraints \eqref{eq:capxx}.
	In addition, $\hat x$ also satisfies the flow balance constraints  \eqref{eq:flowxx} as 
	$\hat  d_i^{uv}=\alpha (d_i^{uv}+ d_i^{vu})$ and $\hat  d_i^{vu}=(1-\alpha) (d_i^{uv}+ d_i^{vu}$) for all $i\in N$.
	Therefore, the desired property holds when $T$ and $\widehat T$ differ only in two entries. 
	Repeating the same argument for the remaining pairs of nodes proves the claim.
\end{proof}
}
\begin{proof}  
Let $T$ and $\widehat T$ be pairwise similar. Consider a pair of nodes $u,v\in N$ with nonzero traffic, i.e., $ \sigma = t_{uv}+ t_{vu} = \hat t_{uv}+\hat t_{vu} > 0$. 

Assuming $y$ accommodates  $T$, let $x$ be a  flow vector such that $(x,y)\in \UT$.
Construct  $\hat x$ from $x$ by letting all entries of $\hat x$ corresponding to $k\in K\sm \{uv,vu\}$ be same as that of $x$.
Let $\alpha = \hat t_{uv}/\sigma$.
For the remaining entries of $\hat x$ associated with commodities $uv$ and $vu$, we set $\hat x_{ij}^{uv} = \alpha (x_{ij}^{uv} +x_{ji}^{vu} )$ and $\hat x_{ij}^{vu} = (1-\alpha) (x_{ij}^{vu} +x_{ji}^{uv} )$ for all $(ij)\in A$. 

Notice that $\hat x_{ij}^{vu}+\hat x_{ji}^{vu}+\hat x_{ij}^{uv}+\hat x_{ji}^{uv} =  x_{ij}^{vu}+ x_{ji}^{vu}+ x_{ij}^{uv}+ x_{ji}^{uv} $ for all $\{i,j\}\in E$. Therefore, $\hat x$ satisfies the capacity constraints \eqref{eq:capxx}.
In addition, $\hat x$ also satisfies the flow balance constraints  \eqref{eq:flowxx} as 
$\hat  d_i^{uv}=\alpha (d_i^{uv}+ d_i^{vu})$ and $\hat  d_i^{vu}=(1-\alpha) (d_i^{uv}+ d_i^{vu}$) for all $i\in N$.
Repeating the same argument for the remaining pairs of nodes proves the claim.
\end{proof}

\ignore{
**** OLD DEFN ***
We say  that  a function $f$ is {\em arc-symmetric} if  $f(x,y)=f(\hat x,y)$ whenever $\hat x$ differs from $x$ only in  components $\hat x_{a}^{k}$  for $a\in\{(ij),(ji)\}$ and $k\in\{(uv),(vu)\}$ where $(ij),(ji)\in A$ and ${uv},{vu}\in K$ in such a way that 
$$\hat x_{ij}^{vu}+\hat x_{ji}^{vu}+\hat x_{ij}^{uv}+\hat x_{ji}^{uv} =  x_{ij}^{vu}+ x_{ji}^{vu}+ x_{ij}^{uv}+ x_{ji}^{uv}. $$
***
}

\begin{definition}
An objective function $f$ is called {\em arc-symmetric} if  $f(x,y)=f(\hat x,y)$ whenever
$$x_{ij}^{vu}+ x_{ji}^{vu}+ x_{ij}^{uv}+ x_{ji}^{uv}  = \hat x_{ij}^{vu}+\hat x_{ji}^{vu}+\hat x_{ij}^{uv}+\hat x_{ji}^{uv} 
\text{ for } { uv},{vu}\in K \text{ and } \{i,j\} \in E.  $$
\end{definition}

\begin{lemma}\label{lem2}
Let $T$ and $\widehat T$ be  { pairwise similar} and $f$ be arc-symmetric.
If $(x,y)\in \UT$, then there exists $(\hat x,y)\in \UTh$ such that $f(x,y)=f(\hat x,y)$.
\end{lemma}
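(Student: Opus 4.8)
The plan is to reuse verbatim the flow construction from the proof of Lemma~\ref{lem1} and then invoke arc-symmetry, so that Lemma~\ref{lem2} becomes essentially a rider on Lemma~\ref{lem1}. First I would fix a pair of nodes $u,v\in N$ with $\sigma=t_{uv}+t_{vu}=\hat t_{uv}+\hat t_{vu}>0$ and build $\hat x$ exactly as there: leave every component for commodities $k\in K\sm\{uv,vu\}$ untouched, and for the two commodities $uv,vu$ set $\hat x_{ij}^{uv}=\alpha(x_{ij}^{uv}+x_{ji}^{vu})$ and $\hat x_{ij}^{vu}=(1-\alpha)(x_{ij}^{vu}+x_{ji}^{uv})$ with $\alpha=\hat t_{uv}/\sigma$. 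Lemma~\ref{lem1} already guarantees that the resulting pair is feasible, i.e. $(\hat x,y)\in\UTh$, so the only new content to establish is the equality of objective values.

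The crucial observation is that this construction leaves every sum appearing in the definition of arc-symmetry unchanged. For the current commodity pair $(uv,vu)$ and any edge $\{i,j\}\in E$, the identity
$$\hat x_{ij}^{vu}+\hat x_{ji}^{vu}+\hat x_{ij}^{uv}+\hat x_{ji}^{uv}= x_{ij}^{vu}+ x_{ji}^{vu}+ x_{ij}^{uv}+ x_{ji}^{uv}$$
is exactly the identity already verified in the proof of Lemma~\ref{lem1}; for every other commodity pair the flow is copied over unchanged, so the corresponding sums are trivially equal. Hence $x$ and $\hat x$ agree on all of the quantities that $f$ is permitted to depend on, and the definition of arc-symmetry yields $f(x,y)=f(\hat x,y)$ for this single-pair modification.

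Finally I would iterate over the remaining node pairs exactly as in Lemma~\ref{lem1}. Since each step changes only the two commodities attached to the current node pair while preserving their arc sums, applying the previous paragraph at every step and chaining the resulting equalities gives $f(x,y)=f(\hat x,y)$ for the fully transformed flow $\hat x$, with $(\hat x,y)\in\UTh$ by Lemma~\ref{lem1}. This completes the argument.

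The main obstacle I anticipate is purely bookkeeping across the iteration: one must confirm that modifying the flow of one commodity pair does not disturb the arc-sum invariants of the commodity pairs treated in earlier steps. This turns out to be automatic, because the sums controlled by arc-symmetry are indexed per commodity pair and each modification touches only the flows of a single pair; the invariants for distinct pairs are therefore decoupled, and the chain of equalities closes without interference. The only genuine subtlety is the precise reading of the arc-symmetry definition, namely that $f$ is invariant whenever the sum equality holds simultaneously for every commodity pair $uv,vu\in K$ and every edge $\{i,j\}\in E$, which is precisely the collection of invariants the construction preserves.
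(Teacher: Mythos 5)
Your proposal is correct and follows essentially the same route as the paper: reuse the flow construction from Lemma~\ref{lem1}, observe that the per-commodity-pair arc sums $x_{ij}^{vu}+x_{ji}^{vu}+x_{ij}^{uv}+x_{ji}^{uv}$ are preserved, and invoke arc-symmetry to conclude $f(x,y)=f(\hat x,y)$. Your additional bookkeeping about iterating over node pairs without disturbing earlier invariants is exactly the content of the paper's appeal to ``repeating the same argument for the remaining pairs of nodes.''
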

\begin{proof}
As in the proof of Lemma~\ref{lem1}, it is possible to construct a flow vector $\hat x$ such that $(\hat x,y)\in \UTh$. Furthermore, as $\hat x_{ij}^{vu}+\hat x_{ji}^{vu}+\hat x_{ij}^{uv}+\hat x_{ji}^{uv} =  x_{ij}^{vu}+ x_{ji}^{vu}+ x_{ij}^{uv}+ x_{ji}^{uv} $ for all $(ij)\in A$ and the  function is arc-symmetric, the result follows.
\end{proof}

Given a traffic matrix  $T$, we define its symmetric counterpart to be $T^*=(T+T^T)/2$.
In other words, $t_{uv}^*=t_{vu}^*= (t_{uv}+ t_{vu})/2$.
Also note that $T$ and $T^*$ are pairwise similar.
We have so far established that optimizing
an arc-symmetric cost function $f$ over $\UT$
is equivalent to optimizing it over $\UTs$.
\begin{lemma}\label{lem3}
	Let  $f$ be an arc-symmetric  function and $T^*$ be a symmetric matrix.
	If $(x,y)\in \UTs$, then there exists $(\hat x,y)\in \UTs$ such that $\hat x_{ij}^{uv}=\hat x_{ji}^{vu}$ for all $(ij)\in A$ and $(uv)\in K$. Furthermore, $f(x,y)=f(\hat x,y)$.
\end{lemma}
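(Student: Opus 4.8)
The plan is to symmetrize the flow within each pair of opposite commodities. Given $(x,y)\in\UTs$, I would define, for every arc $(ij)\in A$ and every commodity $(uv)\in K$,
$$\hat x_{ij}^{uv} = \tfrac12\bigl(x_{ij}^{uv}+x_{ji}^{vu}\bigr).$$
The required identity $\hat x_{ij}^{uv}=\hat x_{ji}^{vu}$ is then immediate, since $\hat x_{ji}^{vu}=\tfrac12\bigl(x_{ji}^{vu}+x_{ij}^{uv}\bigr)$ equals the same expression. Non-negativity of $\hat x$ is clear because it is an average of non-negative quantities.

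Next I would check that $(\hat x,y)\in\UTs$. The key observation, and the one place where symmetry of $T^*$ is essential, is that the \emph{reversal} $\tilde x$ defined by $\tilde x_{ij}=x_{ji}^{vu}$ is itself a feasible flow for commodity $(uv)$: reversing every arc of a $(vu)$-flow turns a flow sending $t^*_{vu}$ units from $v$ to $u$ into one sending the same amount from $u$ to $v$, and $t^*_{vu}=t^*_{uv}$ because $T^*$ is symmetric. Hence $\hat x^{uv}=\tfrac12\bigl(x^{uv}+\tilde x\bigr)$ is an average of two feasible $(uv)$-flows, and since the flow-balance constraints \eqref{eq:flowxx} are linear, $\hat x^{uv}$ satisfies them as well.

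For the capacity constraints \eqref{eq:capxx}, I would verify by direct summation that for each edge $\{i,j\}\in E$ and each opposite pair $(uv),(vu)$,
$$\hat x_{ij}^{vu}+\hat x_{ji}^{vu}+\hat x_{ij}^{uv}+\hat x_{ji}^{uv} = x_{ij}^{vu}+ x_{ji}^{vu}+ x_{ij}^{uv}+ x_{ji}^{uv},$$
so that the total flow on every edge is unchanged and the constraints continue to hold. This same identity shows, through the definition of arc-symmetry, that $f(x,y)=f(\hat x,y)$. Since the transformation acts independently on each unordered pair $\{u,v\}$ and leaves all other commodities untouched, it may be applied to all pairs at once.

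The only genuinely delicate point is the feasibility of the reversed flow $\tilde x$ as a $(uv)$-flow; this is exactly where the hypothesis that $T^*$ is symmetric enters, and it is the reason the lemma is stated for $T^*$ rather than for an arbitrary traffic matrix. Everything else reduces to the linearity of flow balance and to the bookkeeping identity on edge flows already used in the proof of Lemma~\ref{lem1}.
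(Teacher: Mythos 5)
Your proposal is correct and follows essentially the same route as the paper: your $\hat x$ is exactly the average $(x+\tilde x)/2$ of $x$ with its reversal $\tilde x_{ij}^{uv}=x_{ji}^{vu}$, whose feasibility rests on the symmetry of $T^*$, with feasibility of the average following by convexity and the conclusion on $f$ from the edge-sum identity and arc-symmetry. You merely spell out more explicitly than the paper why the reversed flow satisfies the balance constraints, which is a welcome addition rather than a deviation.
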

\begin{proof}
	Let $\tilde x$ be such that $\tilde x_{ij}^{uv}=x_{ji}^{vu}$ for all $(ij)\in A$ and $(uv)\in K$. 
	As $T^*$ is symmetric, $(\tilde x,y)\in \UTs$.
	Furthermore, by convexity, defining $\hat x = (x+\tilde x)/2$ we have  $(\hat x,y)\in \UTs$.
	In addition, $\hat x_{ij}^{uv}=\hat x_{ji}^{vu}$ for all $(ij)\in A$ and $(uv)\in K$.
	Finally, as $f$ is arc-symmetric and $\hat x_{ij}^{vu}+\hat x_{ji}^{vu}+\hat x_{ij}^{uv}+\hat x_{ji}^{uv} =  x_{ij}^{vu}+ x_{ji}^{vu}+ x_{ij}^{uv}+ x_{ji}^{uv} $, for all $(ij)\in A$ and $(uv)\in K$, the claim holds.
\end{proof}

Let $\UTe$ denote  the set of feasible solutions $(x,y)$ to constraints \eqref{eq:flowxx}-\eqref{eq:nonnegxx} 
together with the following equations
\beqn  x_{ij}^{uv}= x_{ji}^{vu} \text{~~~ for all }(ij)\in A,~(uv)\in K.\label{eqeq}\eeqn
Lemma \ref{lem3} in conjunction with Lemma \ref{lem2} establishes that  when $f$ is an arc-symmetric  function,  optimizing $f$ over $\UT$ is same as optimizing it over $\UTse$.

\begin{proposition}\label{cor1}
Let $f$ be an arc-symmetric function,  $T$ be a traffic matrix and let $T^*$ be its symmetric counterpart. Then
$$\min_{(x,y)\in\UT} f(x,y) ~=~\min_{(x,y)\in\UTs} f(x,y)~=~\min_{(x,y)\in\UTse} f(x,y).$$
Furthermore, given an optimal solution to any one of the problems, optimal solutions to the other two can be constructed easily.
\end{proposition}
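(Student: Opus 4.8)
The plan is to read the two claimed equalities as pairs of inequalities between optimal values and to assemble them from Lemmas~\ref{lem2} and~\ref{lem3}, which have already done the substantive work. The key structural observation that makes everything routine is that both lemmas modify only the flow variables $x$, leave the capacity vector $y$ fixed, and preserve the objective value; consequently the inequalities in each direction are witnessed by explicit feasible points, and the constructive ``furthermore'' claim will come along automatically.

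First I would prove $\min_{(x,y)\in\UT} f(x,y) = \min_{(x,y)\in\UTs} f(x,y)$. Since $T$ and $T^*$ are pairwise similar and $f$ is arc-symmetric, Lemma~\ref{lem2} applies: taking $\widehat T = T^*$, every $(x,y)\in\UT$ yields a companion $(\hat x,y)\in\UTs$ with $f(\hat x,y)=f(x,y)$, so $\min_{\UTs} f \le \min_{\UT} f$. As pairwise similarity is a symmetric relation, applying Lemma~\ref{lem2} with $T$ and $T^*$ interchanged gives the reverse inequality, establishing the first equality.

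Next I would prove $\min_{(x,y)\in\UTs} f(x,y) = \min_{(x,y)\in\UTse} f(x,y)$. One inequality is immediate from $\UTse \subseteq \UTs$, since $\UTse$ merely appends the equalities \eqref{eqeq} to the system defining $\UTs$; minimizing over the smaller set gives $\min_{\UTse} f \ge \min_{\UTs} f$. For the reverse, Lemma~\ref{lem3} turns any $(x,y)\in\UTs$ into $(\hat x,y)\in\UTs$ that additionally satisfies \eqref{eqeq} --- hence lies in $\UTse$ --- at the same cost, so $\min_{\UTse} f \le \min_{\UTs} f$.

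The constructive part then needs no separate argument: the recipes of Lemmas~\ref{lem2} and~\ref{lem3}, together with their reverses obtained by swapping $T$ and $T^*$, compose into explicit maps carrying an optimal solution of any one of the three problems to optimal solutions of the other two. I do not expect a real obstacle, since the proposition is a bookkeeping synthesis of the lemmas; the only point requiring a moment's care is invoking the symmetry of pairwise similarity so that Lemma~\ref{lem2} may be read in both directions.
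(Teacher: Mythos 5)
Your proposal is correct and follows essentially the same route as the paper, which presents Proposition~\ref{cor1} as a direct consequence of Lemmas~\ref{lem2} and~\ref{lem3} (the paper states, without a separate proof, that these two lemmas together establish the claim). Your explicit assembly --- Lemma~\ref{lem2} in both directions via the symmetry of pairwise similarity for the first equality, and Lemma~\ref{lem3} plus the containment $\UTse\subseteq\UTs$ for the second --- is exactly the intended argument.
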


Furthermore, notice that if \eqref{eqeq} holds, then 
\beqn \sum_{k \in \cK } \xijk ~~+~ \sum_{k \in \cK } \xjik~ = ~2 \max\Big\{\sum_{k \in \cK } \xijk ,\sum_{k \in \cK } \xjik\Big\}\label{eq:capa} \eeqn
for all $(ij)\in A$.
We next relate these observations on undirected capacity models to network design problems with bidirected capacity constraints.

\section{Bidirected capacity model}
In the bidirected network design problem, the total flow on an arc and total flow on its reverse arc are each limited by the capacity of the undirected edge associated with these arcs.
This problem can be formulated as follows:
\beqan \nonumber 
\text{min~~~~~} ~f(x,y)\hskip2cm&&\\[.3cm]
\text {s.t.~~~~~~~} 
\sum_{j \in \cNouti} \xijk ~~- \sum_{j \in \cNini} \xjik~ &=& d_i^k, ~~~ {\rm for\ } k \in \cK, \  i\in \cN, 
\label{eq:flowxxb} \\ \label{eq:capxxb}
\max\Big\{\sum_{k \in \cK } \xijk ,\sum_{k \in \cK } \xjik\Big\} ~~  &\leq& \bar c_e+ \sum_{m \in \cM } \ c_m  y_{m,e},~~~ {\rm for\ } e=\{i,j\} \in E,\\[.2cm] \label{eq:nonnegxxb} x,~y~&\ge& 0,\quad y\in\Z^{M\times E}.   
\eeqan
Let $\BT$ be the set of feasible solutions $(x,y)$ to inequalities \eqref{eq:capxxb}--\eqref{eq:nonnegxxb}.
We next show that if $T$ is symmetric and $(x,y)\in \BT$, then there exists a flow vector  $\hat x$ such that 
$(\hat x,y)\in \BT$ and $\hat x$ satisfies  \eqref{eqeq}.  
Furthermore$f$ is an arc-symmetric cost function, then $f(x,y)=f(\hat x,y)$. 

\begin{lemma}\label{lem4}
	Let  $f$ be an arc-symmetric  function and $T^*$ be a symmetric matrix.
	If $(x,y)\in \BTs$, then there exists $(\hat x,y)\in \BTs$ such that $\hat x_{ij}^{uv}=\hat x_{ji}^{vu}$ for all $(ij)\in A$ and $(uv)\in K$. Furthermore, $f(x,y)=f(\hat x,y)$.
\end{lemma}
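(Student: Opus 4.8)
The plan is to replicate the argument of Lemma~\ref{lem3} essentially verbatim in structure, since the only feature of the undirected model that that proof actually exploits is that the capacity constraint is a convex function that is symmetric in the two directional flow sums $\sum_{k} \xijk$ and $\sum_{k} \xjik$; the bidirected constraint \eqref{eq:capxxb} shares both of these properties.

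First I would define the reflected flow $\tilde x$ by $\tilde x_{ij}^{uv} = x_{ji}^{vu}$ for all $(ij)\in A$ and $(uv)\in K$, exactly as in Lemma~\ref{lem3}, and then verify $(\tilde x,y)\in\BTs$ in two parts. For flow balance, since $T^*$ is symmetric we have $d_i^{uv} = -d_i^{vu}$ for every node $i$, and because the graph is bidirected the reflection carries the conservation equation for commodity $vu$ at $i$ onto that for commodity $uv$ at $i$, with a sign flip that matches $d_i^{uv} = -d_i^{vu}$; hence \eqref{eq:flowxxb} holds. For the capacity constraint, summing the reflection over commodities gives $\sum_{k} \tilde x_{ij}^k = \sum_{k} \xjik$ and $\sum_{k} \tilde x_{ji}^k = \sum_{k} \xijk$, so the reflection merely interchanges the two arguments of the maximum in \eqref{eq:capxxb}, leaving its value unchanged, and therefore $(\tilde x,y)$ satisfies \eqref{eq:capxxb} with the same $y$.

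Next I would invoke convexity in the flow variables. For fixed integer $y$, the constraint \eqref{eq:capxxb} is equivalent to the pair of linear inequalities $\sum_{k}\xijk \le \bar c_e + \sum_{m}c_m y_{m,e}$ and $\sum_{k}\xjik \le \bar c_e + \sum_{m}c_m y_{m,e}$, so the set of feasible flows is a polyhedron. Averaging the two feasible points $x$ and $\tilde x$, the vector $\hat x = (x+\tilde x)/2$ then satisfies $(\hat x,y)\in\BTs$. By construction $\hat x_{ij}^{uv} = (x_{ij}^{uv}+x_{ji}^{vu})/2 = \hat x_{ji}^{vu}$, which is the required symmetry. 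Finally, averaging preserves the four-term sum $x_{ij}^{vu}+x_{ji}^{vu}+x_{ij}^{uv}+x_{ji}^{uv}$, so arc-symmetry of $f$ gives $f(x,y)=f(\hat x,y)$.

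I do not expect a genuine obstacle. The single point deserving explicit attention, and the only place where the bidirected model differs from Lemma~\ref{lem3}, is confirming that the maximum in \eqref{eq:capxxb} is invariant under the reflection and preserved under averaging. Both follow because $\max\{\cdot,\cdot\}$ is a symmetric, convex function of the two directional flow sums, which is exactly the structural role played by $\sum_{k}\xijk + \sum_{k}\xjik$ in the undirected case; thus the reflect-and-average scheme transfers without change.
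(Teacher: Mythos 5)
Your proposal is correct and follows exactly the paper's own route: the paper's proof of Lemma~\ref{lem4} simply says it is ``essentially identical'' to Lemma~\ref{lem3}, constructing the reflected flow $\tilde x_{ij}^{uv}=x_{ji}^{vu}$ and averaging $\hat x=(x+\tilde x)/2$. Your write-up is in fact more careful than the paper's, since you explicitly verify the only point where the bidirected model differs — that the max in \eqref{eq:capxxb} is invariant under the reflection and, being equivalent to two linear inequalities, is preserved under averaging.
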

\begin{proof} The proof is essentially identical to that of Lemma \ref{lem3}. 
	First we construct $\tilde x\in \UTs$ by letting $\tilde x_{ij}^{uv}=x_{ji}^{vu}$ for all $(ij)\in A$ and $(uv)\in K$. 
	Then we define  $\hat x = (x+\tilde x)/2$ and observe that  $(\hat x,y)\in \UTs$ and that it satisfies the properties in the claim.
\end{proof}

Therefore, if $T^*$ is a symmetric traffic matrix, then
\beqn\min_{(x,y)\in\BTs} f(x,y) ~=~\min_{(x,y)\in\BTse} f(x,y),\label{eq:BTse}\eeqn
where, $\BTse$ is the  the set of feasible solution $(x,y)$ to constraints \eqref{eq:flowxxb}--\eqref{eq:nonnegxxb} together with equations   \eqref{eqeq}.
We next show that optimizing an arc-symmetric cost function  over $\UT$ is equivalent to optimizing a slightly different function over $\BdTs$. 
\begin{proposition}\label{thm1}
Let $f$ be an arc-symmetric function,  $T$ be a traffic matrix and let $T^*$ be its symmetric counterpart. 
Then
$$\min_{(x,y)\in\UT} f(x,y) ~=~\min_{(x,y)\in\BdTs} g(x,y),$$
where  $g(x,y)= f(\frac12x,y)$.
Furthermore, optimal solutions to each problem can be easily mapped into the other.
\end{proposition}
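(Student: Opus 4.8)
The plan is to collapse both optimization problems onto their symmetric-flow restrictions and then connect those restrictions through an explicit scaling of the flow variables. First I would apply Proposition~\ref{cor1} to rewrite the left-hand side as $\min_{(x,y)\in\UTse} f(x,y)$, restricting attention to flows obeying the symmetry equations~\eqref{eqeq} together with flow balance for the symmetric matrix $T^*$.

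For the right-hand side I would first note that $g$ is again arc-symmetric: the defining equality of arc-symmetry is homogeneous in the flow, so it is preserved when $x$ and $\hat x$ are both scaled by $\tfrac12$, whence $f(\tfrac12 x,y)=f(\tfrac12\hat x,y)$ exactly when the arc-symmetry condition holds for $x$ and $\hat x$. Since $T^*$ symmetric implies $2T^*$ symmetric, equation~\eqref{eq:BTse}, applied with $g$ in place of $f$ and $2T^*$ in place of $T^*$, gives $\min_{(x,y)\in\BdTs} g(x,y)=\min_{(x,y)\in\BdTse} g(x,y)$.

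The heart of the proof, and the step I expect to require the most care, is to show that $(x,y)\mapsto(2x,y)$ is a bijection between $\UTse$ and $\BdTse$ under which $f(x,y)=g(2x,y)$. The objective identity is immediate, since $g(2x,y)=f(\tfrac12\cdot 2x,y)=f(x,y)$. Flow balance transfers because scaling a flow by $2$ scales its demand from $T^*$ to $2T^*$, and the equations~\eqref{eqeq} are clearly preserved under scaling. The capacity constraints are where the relationship between the two models is genuinely used: by~\eqref{eqeq} the two directional flow sums across an edge coincide, so~\eqref{eq:capa} turns the undirected constraint~\eqref{eq:capxx} on $x$ into $2\sum_{k\in\cK}\xijk\le\bar c_e+\sum_{m\in\cM}c_m y_{m,e}$, which is exactly the bidirected constraint~\eqref{eq:capxxb} evaluated at the flow $2x$. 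The inverse map $(x,y)\mapsto(\tfrac12 x,y)$ reverses each of these verifications, so the correspondence is a value-preserving bijection and $\min_{(x,y)\in\UTse} f(x,y)=\min_{(x,y)\in\BdTse} g(x,y)$.

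Chaining the three equalities then yields
$$\min_{(x,y)\in\UT} f(x,y)=\min_{(x,y)\in\UTse} f(x,y)=\min_{(x,y)\in\BdTse} g(x,y)=\min_{(x,y)\in\BdTs} g(x,y),$$
which is the claim. Since every reduction above---Proposition~\ref{cor1}, equation~\eqref{eq:BTse}, and the scaling map---is constructive, an optimal solution to any one problem maps explicitly to an optimal solution of the other, establishing the final assertion.
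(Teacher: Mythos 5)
Your proposal is correct and follows essentially the same route as the paper: reduce both sides to the symmetric-flow restrictions $\UTse$ and $\BdTse$ (via Proposition~\ref{cor1} and Lemma~\ref{lem4}/equation~\eqref{eq:BTse}), then connect them by the value-preserving scaling bijection $(x,y)\mapsto(2x,y)$ justified through \eqref{eqeq} and \eqref{eq:capa}. If anything, you are slightly more careful than the paper in checking that $g$ is itself arc-symmetric and that the reduction on the bidirected side is applied to $2T^*$ rather than $T^*$.
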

\begin{proof}
By Corollary \ref{cor1}, we have	$\min_{(x,y)\in\UT} f(x,y) =\min_{(x,y)\in\UTse} f(x,y)$ and a common optimal solution to both problems can easily be constructed from an optimal solution to the first one.
In addition, by Lemma \ref{lem4}, 
$\min_{(x,y)\in\BTs} f(x,y) =\min_{(x,y)\in\BTse} f(x,y)$ and a common optimal solution to both problems can easily be constructed from an optimal solution to the first one.
Therefore, it suffices to show the result for  $\UTse$ and $\BdTse$ instead of $\UTs$ and $\BdTs$.

Consider a point  $p=(\bar x,\bar y)\in\UTse$. As $\bar x$ satisfies equation \eqref{eqeq} and therefore \eqref{eq:capa}, we have $p'=(2 \bar x,\bar y)\in\BdTse$.  
As  $g(x,y)= f(\frac12x,y)$, we also have $g(p')=f(p)$.
Conversely, and given a solution $q=( x', y')\in\BdTse$ we construct  $q'=(\frac12 x', y')\in\UTse$ with $g(q)=f(q')$. Combining these observations, we conclude that 
$$\min_{(x,y)\in\UTe} f(x,y) ~=~\min_{(x,y)\in\BdTse} g(x,y)$$
and the proof is compete.
\end{proof}

As an application of Theorem \ref{thm1}, consider a valid inequality $$ax+cy\ge b$$ for $\UT$ where vectors $a$ and $c$ are of appropriate size.
If $ax+cy$ is arc-symmetric, that is, if 
\beqn a_{ij}^{vu}=a_{ji}^{vu}=a_{ij}^{uv}=a_{ji}^{uv} \text{ for all $(ij)\in A$ and $(uv)\in K$},
\label{eq:syma}\eeqn
 then 
$$\min_{(x,y)\in\UT} ax+cy  ~=~\min_{(x,y)\in\BdTs} \frac12ax+cy ~=~\hat b~\ge~b$$
and therefore
$$\frac12ax+cy\ge b$$ is   valid for $\BdTs$.
Conversely, if $ax+cy\ge b$ is   valid for $\BdTs$, then $2ax+cy\ge b$ is valid for $\UT$.
Therefore, valid inequalities that are defined by coefficients satisfying \eqref{eq:syma} can easily be translated between $\UT$ and $\BdTs$.

In particular,  consider an inequality  $cy\ge b$ that depends on the capacity variables  only.
Clearly $cy$ is  arc-symmetric and if $cy\ge b$  is a valid inequality for $\UT$, then it is  valid for $\BdTs$ as well.
Similarly, if $cy\ge b$ is valid for $\BdTs$, then it is also valid for $\UT$.
Combining this  with Corollary \ref{cor1} and equation \eqref{eq:BTse}, we make the following observation.

\begin{corollary}
Projections of the sets $\UT$, $\UTs$, $\UTse$,  $\BdTs$, and $\BdTse$  onto the space of capacity variables are equal.
\end{corollary}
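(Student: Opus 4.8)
The plan is to prove the five-way equality as a chain
$$\proj y{\UT} = \proj y{\UTs} = \proj y{\UTse} = \proj y{\BdTse} = \proj y{\BdTs},$$
treating each link with the lemma tailored to it and simply reading off the capacity coordinate. The guiding observation is that every preceding result is established by an explicit flow reconstruction that leaves $y$ untouched; consequently each one delivers not just an equality of optimal values but a genuine equality of sets after projection onto the $y$-space. I would therefore invoke the \emph{constructive} content of the lemmas rather than their optimization statements.

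First I would dispatch the two undirected links. Since $T$ and $T^*$ are pairwise similar, Lemma~\ref{lem1} asserts that $y$ accommodates $T$ if and only if it accommodates $T^*$, which is exactly $\proj y{\UT}=\proj y{\UTs}$. For $\proj y{\UTs}=\proj y{\UTse}$, the inclusion $\UTse\subseteq\UTs$ is immediate because $\UTse$ merely adds equations \eqref{eqeq}; the reverse inclusion of projections follows from Lemma~\ref{lem3}, which, given $(x,y)\in\UTs$, produces $(\hat x,y)\in\UTs$ satisfying \eqref{eqeq}, hence $(\hat x,y)\in\UTse$, with the same $y$. Here I use only the existence half of Lemma~\ref{lem3}, so arc-symmetry of $f$ is irrelevant and one may take $f\equiv 0$.

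The two bidirected links are handled identically. I would first note that $2T^*$ is again symmetric, so Lemma~\ref{lem4}, although stated for $\BTs$, applies verbatim to $\BdTs$. Then $\BdTse\subseteq\BdTs$ is clear, and given $(x,y)\in\BdTs$ Lemma~\ref{lem4} furnishes $(\hat x,y)\in\BdTse$ with the same capacity vector, yielding $\proj y{\BdTs}=\proj y{\BdTse}$.

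The remaining link $\proj y{\UTse}=\proj y{\BdTse}$ is the crux, and it is already contained in the proof of Theorem~\ref{thm1}: the map $(\bar x,\bar y)\mapsto(2\bar x,\bar y)$ carries $\UTse$ bijectively onto $\BdTse$, with inverse $(x',y')\mapsto(\tfrac12 x',y')$. Because this bijection fixes the $y$-coordinate, the two sets have identical projections onto the capacity space, and chaining the five equalities completes the proof. The only point demanding care — and the closest thing to an obstacle — is to route everything through these $y$-preserving constructions rather than through the value statements of Corollary~\ref{cor1} and \eqref{eq:BTse}. The latter, owing to the integrality of $y$, pin down equality of optima only for linear objectives; to recover equality of the projections as sets from them one would additionally have to observe that arc-symmetric objectives may be chosen to depend on $y$ alone, which is why the direct constructive argument is cleaner.
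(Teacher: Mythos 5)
Your proof is correct, and it rests on the same ingredients the paper uses (Lemmas~\ref{lem1}, \ref{lem3}, \ref{lem4} and the scaling map from Proposition~\ref{thm1}), but it is organized differently and, in one respect, more carefully. The paper justifies the corollary in a single sentence by combining the transferability of capacity-only valid inequalities $cy\ge b$ between $\UT$ and $\BdTs$ with Proposition~\ref{cor1} and equation~\eqref{eq:BTse}; as stated, that argument compares optimal values of linear objectives and therefore only identifies the (closed convex hulls of the) projections, whereas the corollary asserts equality of the projections as sets of points with $y$ integral. Your chain $\proj y{\UT}=\proj y{\UTs}=\proj y{\UTse}=\proj y{\BdTse}=\proj y{\BdTs}$, built entirely from the $y$-preserving flow reconstructions inside the lemmas (and the bijection $(x,y)\mapsto(2x,y)$ between $\UTse$ and $\BdTse$), delivers the set equality directly, and your observation that Lemma~\ref{lem4} applies to $\BdTs$ because $2T^*$ is again symmetric, and that arc-symmetry of $f$ is vacuous for the existence halves, is accurate. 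In short: same machinery, but you extract the constructive content explicitly, which is what the statement actually requires; your closing caveat about the value-based route is a fair criticism of the paper's own terse justification.
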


Therefore, for the undirected case it suffices to study symmetric traffic matrices to characterize all valid inequalities that involve capacity variables only. The same, however, is not the case for the bidirected case.

\begin{remark}
The observation above suggests that the polyhedral structure of the bidirected network design problem is more complicated than that of the undirected one.
To demonstrate this, consider an instance of the bidirected network design problem that has a single facility type with $c_m=1$. 
Let $G=(N,E)$ be the complete (undirected) graph on three nodes and assume that there is no existing capacity.
It is known that \cite{BG:cnd} the projection of feasible solutions onto the space of capacity variables is 
$$\proj{y}{\BT} = \left\{ y\in\R^3\::\: y(i)\ge T(i) ~\forall i\in N,
~\sum_{e\in E} y_e\ge\max\Big\{\left\lceil\frac{\sum_{i\in N}T(i)}2\right\rceil,~\left\lceil\Theta\right\rceil\Big\}\right\},$$
where for $i\in N$ we let $j$ and $k$ denote the remaining two nodes in $N\sm\{i\}$ and define
$y(i)=y_{\{i,j\}}+y_{\{i,k\}}$, $T(i)=\left\lceil\max\{t_{ij}+t_{ik},t_{ji}+t_{ki}\}\right\rceil$, and
$\Theta=\max_{(i,j,k)\in\Pi}\{t_{ij}+t_{ik}+t_{jk}\}$ where $\Pi$ contains all triplets obtained by permuting the nodes in $N=\{1,2,3\}$.

Now consider $T^*$, the symmetric counterpart of $T$, and notice that  for all $i\in N$, we have $t^*_{ij}+t^*_{ik}=t^*_{ji}+t^*_{ki}$ and therefore $T^*(i)=\left\lceil t_{ij}+t_{ik}\right\rceil$.
In addition, for all $(i,j,k)\in\Pi$ we have $\Theta=t^*_{ij}+t^*_{ik}+t^*_{jk}$ and 
$\ceil{(T^*(1)+T^*(2)+T^*(3))/2}\ge \ceil\Theta$ as 
$$\left\lceil{
	\frac{
		\left\lceil t^*_{12}+t^*_{13}\right\rceil+
		\left\lceil t^*_{12}+t^*_{23}\right\rceil+
		\left\lceil t^*_{13}+t^*_{23}\right\rceil}
	2}\right\rceil
\ge \left\lceil \frac{2t^*_{12}+2t^*_{13}+2t^*_{23}}2\right\rceil.$$
Consequently, we have the simpler projection 
\beqa\proj{y}{\BTs} &=& \left\{ y\in\R^3\::\: y(i)\ge T^*(i) ~\forall i\in N,
			~\sum_{e\in E} y_e\ge\left\lceil\frac{\sum_{i\in N}T^*(i)}2\right\rceil\right\}\\
&=&\proj{y}{U(T/2)}\eeqa
for the undirected model.
\end{remark}

\section{Directed capacity model}


In the directed network design problem, the capacities are added on the arcs so that the total flow on an arc is limited by the capacity of the arc. The problem is modeled as:
\beqan \nonumber 
\text{min~~~~~} ~f(x,y)\hskip2cm&&\\[.3cm]
\text {s.t.~~~~~~~} 
\sum_{j \in \cNouti} \xijk ~~- \sum_{j \in \cNini} \xjik~ &=& d_i^k, ~~~ {\rm for\ } k \in \cK, \  i\in \cN, 
\label{eq:flowxxd} \\ \label{eq:capxxd}
\sum_{k \in \cK } \xijk  ~~  &\leq& \bar c_{ij}+ \sum_{m \in \cM } \ c_m  y_{m,ij},~~~ {\rm for\ } (ij) \in A,\\[.2cm] \label{eq:nonnegxxd} x,~y~&\ge& 0, \quad y\in\Z^{M\times A}.   
\eeqan
Let $\DT$ denote  the set of feasible solutions $(x,y)$ to inequalities \eqref{eq:flowxxd}--\eqref{eq:nonnegxxd}.
Let $y^1$ be the vector of capacity variables on arcs $(ij)$ with $i<j$ and
$y^2$ be the vector of capacity variables on arcs $(ij)$ with $i>j$.
Similarly define existing capacity vectors $\bar c^1, \ \bar c^2$.
Observe for a graph that has a reverse arc $(ji)$ for each arc $(ij)$ that adding constraints 
$y^1 = y^2$ to $\DT$ gives an equivalent formulation to $\BT$ provided that
$\bar c_{ij} = \bar c_{ji} = \bar c_e$ for all $(ij)$ and $e= \{i,j\}$. Let 
$D^=(T) = \DT \cap \{y \in \Z^{M \times A}: y^1 = y^2\}$. 
 
\begin{lemma} 
	Suppose $\bar c^1 = \bar c^2 = \bar c$.
The point
$(x,y)$ is feasible for $\BT$ if and only if $(x,y^1, y^2)$ with $y^1 = y^2 = y$ is feasible for $D^=(T)$. 
\end{lemma}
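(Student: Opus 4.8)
The plan is to set up the obvious identification between the edge-indexed capacity vector of $\BT$ and the arc-indexed vector of $\DT$ restricted to $y^1=y^2$, and then verify that this identification carries feasible points to feasible points in both directions, one constraint class at a time. Concretely, given $y$ indexed by $E$ I would associate to it the arc-indexed vector defined by $y_{m,ij}=y_{m,ji}=y_{m,e}$ for each edge $e=\{i,j\}$ (that is, $y^1=y^2=y$), and conversely read off the edge vector from any point of $D^=(T)$, whose defining constraint $y^1=y^2$ guarantees a well-defined common value on each opposite arc pair.

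First I would dispose of the two easy constraint classes. The flow-balance constraints \eqref{eq:flowxxb} and \eqref{eq:flowxxd} are literally the same system: they involve only the flow variables $x$ and the demand data $d_i^k$ and never reference $y$. Hence they hold for $(x,y)$ in the bidirected formulation exactly when they hold for $(x,y^1,y^2)$ in the directed one. Nonnegativity of $x$ transfers verbatim, and integrality matches as well, since the common value $y_{m,e}=y_{m,ij}=y_{m,ji}$ is integral on the edge side precisely when it is integral on both arcs.

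The \emph{crux} is the capacity constraints. Using the hypothesis $\bar c^1=\bar c^2=\bar c$ together with $y^1=y^2$, the right-hand sides of the two directed constraints \eqref{eq:capxxd} for the opposite arcs $(ij)$ and $(ji)$ coincide, and both equal $\bar c_e+\sum_{m\in\cM}c_m y_{m,e}$, which is exactly the right-hand side of the single bidirected constraint \eqref{eq:capxxb} for $e=\{i,j\}$. Therefore the pair of directed inequalities reads
$$\sum_{k\in\cK}\xijk \le \bar c_e+\sum_{m\in\cM}c_m y_{m,e} \quad\text{and}\quad \sum_{k\in\cK}\xjik \le \bar c_e+\sum_{m\in\cM}c_m y_{m,e},$$
and by the elementary equivalence $\max\{a,b\}\le c \iff (a\le c \text{ and } b\le c)$ this conjunction is precisely the bidirected constraint \eqref{eq:capxxb}. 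Applying this over all edges $e\in E$, equivalently over all opposite arc pairs, shows that the capacity constraints hold on one side if and only if they hold on the other.

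Since all three constraint classes transfer in both directions under the identification $y^1=y^2=y$, feasibility for $\BT$ is equivalent to feasibility for $D^=(T)$, which completes the argument. I do not expect any genuine obstacle: the only substantive point is matching the single $\max$ constraint against the pair of one-sided arc constraints, and this rests entirely on the symmetry hypotheses $\bar c^1=\bar c^2$ and $y^1=y^2$, which are exactly what force the two directed right-hand sides to agree. Everything else is bookkeeping.
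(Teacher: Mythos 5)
Your proposal is correct and follows essentially the same route as the paper: identify $y^1=y^2=y$, note that flow balance and nonnegativity transfer verbatim, and reduce the capacity constraints to the elementary equivalence of $\max\{a,b\}\le c$ with the conjunction of the two one-sided directed inequalities, which is exactly the paper's (much terser) argument. Your version is simply a more explicit write-up, spelling out why the hypotheses $\bar c^1=\bar c^2$ and $y^1=y^2$ make the two directed right-hand sides coincide.
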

\begin{proof}
If $(x,y)$ is feasible for $\BT$, then letting $y^1 = y^2 = y$ gives a point $(x,y^1,y^2)$ in $\DT$ satisfying $y^1 = y^2$. Conversely, if $(x,y^1,y^2)$ is feasible for $D^=(T)$, then
$(x,y^1)$ is feasible for $\BT$ as 
\[
max \bigg \{ \sum_{k \in K} x_{ij}^k, \sum_{k \in K} x_{ji}^k \bigg \} \le \bar c_{\{i,j\}} + y^1_{\{i,j\}}.
\]
\end{proof}

Consequently, every valid inequality for $\DT$ yields a valid inequality for $\BT$ as shown in the next proposition.

\begin{proposition} \label{prop:di-to-bi}
If $\pi x + \beta^1 y^1 + \beta^2 y^2 \ge \pi_o$ is valid for $\DT$, then
$\pi x + (\beta^1 + \beta^2) y \ge \pi_o$ is valid for $\BT$.
\end{proposition}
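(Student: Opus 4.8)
The plan is to lift any feasible point of $\BT$ to a feasible point of $\DT$ via the identification $y^1 = y^2 = y$ supplied by the preceding lemma, and then simply evaluate the given valid inequality at the lifted point. Since the whole argument rests on that lemma, the proof will be short.

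First I would fix an arbitrary point $(x,y) \in \BT$. By the preceding lemma (under the standing assumption $\bar c^1 = \bar c^2 = \bar c$ inherited from its statement), setting $y^1 = y^2 = y$ yields a point $(x,y^1,y^2) \in D^=(T)$. Because $D^=(T) = \DT \cap \{y^1 = y^2\} \subseteq \DT$, this lifted point lies in $\DT$ as well.

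Next, since $\pi x + \beta^1 y^1 + \beta^2 y^2 \ge \pi_o$ is valid for $\DT$ by hypothesis, it must hold at the lifted point; substituting $y^1 = y^2 = y$ gives
$$\pi x + \beta^1 y + \beta^2 y = \pi x + (\beta^1 + \beta^2) y \ge \pi_o.$$
As $(x,y)$ was an arbitrary point of $\BT$, the inequality $\pi x + (\beta^1 + \beta^2) y \ge \pi_o$ holds throughout $\BT$, which is exactly the claimed validity.

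There is no genuine obstacle here: all the content is packaged in the preceding lemma, which guarantees that the lifting $y \mapsto (y,y)$ maps $\BT$ feasibility-preservingly into $D^=(T)$. The only points requiring care are to observe that one uses validity over all of $\DT$ (not merely over $D^=(T)$), which is automatic from $D^=(T) \subseteq \DT$, and to keep track of the hypothesis $\bar c^1 = \bar c^2 = \bar c$, which is what makes the two directed capacity constraints on $(ij)$ and $(ji)$ collapse into the single bidirected $\max$ constraint of \eqref{eq:capxxb}.
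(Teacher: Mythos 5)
Your proposal is correct and follows essentially the same route as the paper: lift $(x,y)\in\BT$ to $(x,y,y)\in D^=(T)\subseteq\DT$ via the preceding lemma and evaluate the directed inequality there. Your version is just slightly more explicit about invoking the lemma and the hypothesis $\bar c^1=\bar c^2=\bar c$, which the paper leaves implicit.
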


\begin{proof}
Let $(x,y)$ be a feasible point in $\BT$. For $y^1 = y^2 = y$, we have 
\[
\pi x + (\beta^1 + \beta^2) y = \pi x + \beta^1 y + \beta^2 y = \pi x + \beta^1 y^1 + \beta^2 y^2 \ge \pi_o.
\]
Thus inequality holds.
\end{proof}

\begin{remark}
	Now we will show that some of the valid inequalities derived for B(T) in the literature can be obtained directly from 
	valid inequalities for D(T) via Proposition~\ref{prop:di-to-bi}.
	
	Consider a nonempty two-partition $(U,V)$ of the vertices of the network.
	Let $b^k$ denote the net demand of commodity $k$ in $V$ from $U$. 
	Let $A^+$ be the set of arcs directed from $U$ to $V$,
	$A^-$ be the set of arcs directed from $V$ to $U$, and $A = A^+ \cup A^-$.
	For $Q \subseteq K$ let $x_Q(S) = \sum_{k \in Q} x^k(S)$ and $b_Q = \sum_{k \in Q} b^k$.
	Without loss of generality, assume $b_Q \ge 0$.

	For $Q \subseteq K$ and $s \in M$ let $r_{s,Q} = b_Q' - \FLOOR{b_Q' / c_s}c_s$ and
	$\eta_{s,Q} = \CEIL{b_Q' / c_s}$, where  $b'_Q =  b_Q -\bar c(S^+) + \bar c(S^-)$. Then
	{\it multi-commodity multi-facility cut-set inequality} \citep{A:nd}
	\begin{equation}
	\label{eq:nd:lm-cutset}
	\sum_{m \in M} \phi_{s,Q}^+(c_m) y_m(S^+) \hspace{-0.5mm} + 
	\hspace{-0.5mm} x_Q(A^+ \hspace{-0.5mm} \setminus S^+)  \hspace{-0.5mm} + 
	\hspace{-1mm} \sum_{m \in M} \phi_{s,Q}^-(c_m) y_m(S^-) \hspace{-0.5mm} - \hspace{-1mm} x_Q(S^-) \hspace{-0.5mm} \geq 
	\hspace{-0.5mm} r_{s,Q} \eta_{s,Q},
	\end{equation}
	where
	\[
	\phi_{s,Q}^+(c) =
	\begin{cases}
	c - k(c_s-r_{s,Q}) & \text{if} \ kc_s \leq c < kc_s + r_{s,Q}, \\
	(k+1)r_{s,Q}       & \text{if} \ kc_s + r_{s,Q} \leq c < (k+1)c_s,
	\end{cases}
	\]
	and
	\[
	\phi_{s,Q}^-(c) =
	\begin{cases}
	c - kr_{s,Q}       & \text{if} \ kc_s \leq c < (k+1)c_s - r_{s,Q}, \\
	k(c_s-r_{s,Q})     & \text{if} \ kc_s - r_{s,Q} \leq c < kc_s, \\
	\end{cases}
	\]
	and $k \in \ZZ$, is valid for \DT.
	Above $\phi_{s,Q}^+$ and $\phi_{s,Q}^-$ are subadditive MIR functions written in closed form. 	Let $E^+$ be the undirected edges corresponding to $S^+$ and $E^-$ be the undirected edges corresponding to $S^-$. Then, Proposition~\ref{prop:di-to-bi} implies the bidirected valid inequality
		\begin{equation}
	\label{eq:nd:bi-lm-cutset}
	\sum_{m \in M} \phi_{s,Q}^+(c_m) y_m(E^+) \hspace{-0.5mm} + 
	\hspace{-0.5mm} x_Q(A^+ \hspace{-0.5mm} \setminus S^+)  \hspace{-0.5mm} + 
	\hspace{-1mm} \sum_{m \in M} \phi_{s,Q}^-(c_m) y_m(E^-) \hspace{-0.5mm} - \hspace{-1mm} x_Q(S^-) \hspace{-0.5mm} \geq 
	\hspace{-0.5mm} r_{s,Q} \eta_{s,Q}
	\end{equation}
	for \BT, which is given by \citet{RKOW:cutset}.

For the single facility case with $c_m = 1$, the directed inequality \eqref{eq:nd:lm-cutset}  reduces to 
\begin{align}
\label{eq:nd:flow-cut-set-inout}
r_Q y(S^+)+ x_Q(A^+ \setminus S^+) + (1-r_Q) y(S^-) - x_Q(S^-) \geq r_Q \eta_Q, 
\end{align}
where $r_Q = b_Q' - \FLOOR{b_Q'}$ and $\eta_Q = \CEIL{b_Q'}$. Proposition~\ref{prop:di-to-bi} implies the corresponding 
bidirected valid inequality
\begin{align}
\label{eq:nd:bi-flow-cut-set-inout}
r_Q y(E^+)+ x_Q(A^+ \setminus S^+) + (1-r_Q) y(E^-) - x_Q(S^-) \geq r_Q \eta_Q, 
\end{align}
for \BT.
\end{remark}

\ignore{
\citet{BG:cnd,CGS:nd-batch} generalize the basic cut-set inequalities \eqref{eq:nd:cut-set}
by incorporating the flow variables in addition to the capacity variables (see Figure~\ref{fig:cut-arcs}).	
For $S^+ \subseteq A^+$, $S^- \subseteq A^-$ and $Q \subseteq K$ consider the following relaxation of \FMS:
\begin{align*}
\bar c(S^+) + 	c y(S^+) +  x_Q(A^+ \setminus S^+)  - x_Q(S^-) & \ge   b_Q, \\
0 \leq \sum_{k \in Q} x^k_a & \leq  \bar c_a +   c y_a,  \ \forall a \in A, 
\end{align*}
which is written equivalently as
\begin{align*}
c \big [y(S^+)- y(S^-) \big] +  x_Q(A^+ \setminus S^+) + \big [ \bar c(S^-) + c y(S^-) - x_Q(S^-) \big] & \ge   
b_Q', \\
0 \leq \sum_{k \in Q} x^k_a & \leq  \bar c_a +  c y_a,  \ \forall a \in A. 
\end{align*}
where $b'_Q =  b_Q -\bar c(S^+) + \bar c(S^-)$.
Letting $r_Q = b_Q' - \FLOOR{b_Q'/c}c$ and $\eta_Q = \CEIL{b_Q'/c}$ and 
observing that  $x_Q(A^+ \setminus S^+)  \ge 0$, $\bar c(S^-) + c y(S^-) - x_Q(S^-) \ge 0$,
we can apply the MIR procedure to this relaxation to arrive at 
}

\section{Concluding Remarks}

We examined the relationship between three separately-studied capacity models for network design to provide a unifying approach and showed how to translate valid inequalities from one to the others. The directed model is the most general one and, therefore, valid inequalities for it can be translated into bidirected and undirected models. It is shown that the projections of the undirected and bidirected models onto the capacity variables are the same. We demonstrated that valid inequalities previously given for undirected and bidirected models can be seen as a consequence of the relationship between these models and the directed model.

\section*{Acknowledgement}

A. Atamt\"urk is supported, in part,
by grant FA9550-10-1-0168 from the Office of the Assistant Secretary of Defense for Research and Engineering.
O. G\"unl\"uk would like to thank the Numerical Analysis Group at Oxford Maths Institute for hosting him during this project. 

\bibliographystyle{plainnat}
\bibliography{../Paper/master}

\end{document}